\documentclass{amsart}

\newtheorem{theorem}[subsection]{Theorem}
\newtheorem{lemma}[subsection]{Lemma}
\newtheorem*{claim}{Claim}

\theoremstyle{definition}
\newtheorem{definition}[subsection]{Definition}

\begin{document}

\title{A lower bound on minimal number of colors for links}

\author{Kazuhiro Ichihara}
\address{Department of Mathematics, 
College of Humanities and Sciences, Nihon University,
3-25-40 Sakurajosui, Setagaya-ku, Tokyo 156-8550, Japan}
\email{ichihara@math.chs.nihon-u.ac.jp}

\author{Eri Matsudo}
\address{Graduate School of Integrated Basic Sciences, Nihon University,
3-25-40 Sakurajosui, Setagaya-ku, Tokyo 156-8550, Japan}
\email{s6114M10@math.chs.nihon-u.ac.jp}

\dedicatory{Dedicated to Professor Yasutaka Nakanishi on the occasion of his 60th birthday}

\keywords{coloring, link}

\subjclass[2010]{57M25}

\date{\today}

\maketitle

\begin{abstract}
We show that the minimal number of colors for all effective $n$-colorings of a link with non-zero determinant is at least $1+\log_2 n$.
\end{abstract}

\section{Introduction}

In \cite{Fox}, Fox introduced one of the most well-known invariants for knots and links, 
now called \textit{the Fox $n$-coloring}, or simply $n$-coloring 
for a natural number $n$.

In \cite{HararyKauffman}, 
Harary and Kauffman first studied 
\textit{the minimal number of colors} for colorings of knot or link diagrams. 
In \cite[Lemma 2.1]{Satoh}, Satoh showed that 
any non-trivial $n$-coloring for a knot diagram 
needs at least four colors if $n > 3$, and 
Kauffman and Lopez showed in \cite[Proposition 3.5]{KauffmanLopez} that 
the same holds for a non-splittable link diagram if $\gcd (n,3)=1$. 
Also, in \cite[Example 2.7]{Satoh}, it is stated that 
it can be shown similarly to \cite[Lemma 2.1]{Satoh} 
that any non-trivial $n$-coloring for a knot diagram 
needs at least five colors if $n > 7$. 
For the link case, the same was shown by Lopes and Matias 
in \cite[Theorem 1.4]{LopesMatias} 
if $n$ and the determinant of the link 
has the least common prime divisor greater than 7. 
Recently it is shown in \cite[Theorem 15]{GeJinKauffmanLopesZhang} 
the minimal number of colors of an $n$-colorable link 
with non-zero determinant is at least 6 if $n$ is a prime greater than 13. 

The results above for the knot case 
are extended by Nakamura, Nakanishi and Satoh 
in \cite[Theorem 1.1]{NakamuraNakanishiSatoh1} as follows. 
The minimal number of the distinct colors 
for all the non-trivially $n$-colored diagrams of a knot 
is greater than $ 1+ \log_2 n$ if $n$ is odd prime. 
They further showed in \cite[Theorem 2.7]{NakamuraNakanishiSatoh2} 
that it also holds when $n$ is odd and not necessarily prime 
for all effectively $n$-colored diagrams of a knot. 
See the next section for the definition of an effective coloring. 

It was pointed out in \cite[Section 1]{GeJinKauffmanLopesZhang} 
that their proof for \cite[Theorem 1.1]{NakamuraNakanishiSatoh1} 
can not be naturally extended to $n$-colorable links with non-zero determinant. 
In view of this, it is natural to ask what can we say for links. 
In this paper, we show the following for effectively $n$-colored diagrams of links.

\begin{theorem}\label{thm}
Let $n$ be a natural number. 
For any $n$-colorable link $L$ with non-zero determinant, 
let $C^*_n(L)$ be the minimal number of colors on 
effectively $n$-colored diagrams of $L$. 
Then $C^*_n (L) \geq 1+\log_2n$ holds. 
\end{theorem}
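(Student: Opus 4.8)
The plan is to reduce the general link statement to a counting/linear-algebra argument about how the colors used in an effective coloring propagate under the Fox coloring relations. First I would recall the setup: an $n$-coloring assigns to each arc of a diagram an element of $\mathbb{Z}/n\mathbb{Z}$ so that at every crossing the over-arc color $b$ and the two under-arc colors $a,c$ satisfy $a+c\equiv 2b \pmod n$, and the coloring is \emph{effective} when it is not one of the trivial colorings (so that it genuinely uses the coloring structure). Since $L$ has non-zero determinant, the relevant coloring matrix has nontrivial behavior mod the prime divisors of $n$, and I would want to work one prime at a time: if $C^*_n(L)\ge 1+\log_2 p$ can be shown whenever $p\mid n$, then choosing the largest prime power or simply the largest prime dividing $n$ should control the bound. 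The key object is the set $S\subseteq\mathbb{Z}/n\mathbb{Z}$ of colors actually appearing on a minimal effective diagram, with $|S|=C^*_n(L)$.

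The central step is to show that the colors in $S$ generate a large subgroup through the coloring relations, and to bound the number of colors from below by $1+\log_2 n$ via a doubling argument. The idea I would pursue mirrors the knot case of Nakamura--Nakanishi--Satoh: starting from two distinct colors $a,b\in S$, the relation $a+c=2b$ forces the "reflected" color $c=2b-a$ to also appear somewhere once one tracks the arcs through crossings, and iterating reflections of the colors in $S$ across each other produces new colors that must already lie in $S$. Concretely, I would consider the differences $S-S=\{s-s': s,s'\in S\}\subseteq\mathbb{Z}/n\mathbb{Z}$ and argue that this difference set is closed (or nearly closed) under doubling: if $d$ is a realized difference then $2d$ is as well, because reflecting one color across another doubles the gap. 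This is what injects a factor of $2$ at each stage and ultimately yields the $\log_2$ bound.

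The heart of the estimate is then a statement of the form: if the set of color-differences is closed under multiplication by $2$ and generates all of $\mathbb{Z}/n\mathbb{Z}$ (which effectiveness plus non-zero determinant should guarantee, since the coloring is nontrivial modulo some prime dividing $n$), then $|S|$ must be at least $1+\log_2 n$. I would prove this by picking a single nonzero difference $d_0$ and looking at the chain $d_0, 2d_0, 4d_0,\dots,2^k d_0$; the number of \emph{distinct} elements in this chain is governed by the multiplicative order of $2$ modulo the relevant factor of $n$, and each distinct value is witnessed by a pair of colors in $S$, so a pigeonhole/entropy count on pairs from an $|S|$-element set gives $\binom{|S|}{2}\gtrsim$ (number of forced differences), from which $|S|\ge 1+\log_2 n$ follows after rearrangement.

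The main obstacle I anticipate is precisely the point flagged in the introduction for \cite{GeJinKauffmanLopesZhang}: unlike knots, a link diagram has several components, so effectiveness does not immediately force the coloring to be nontrivial on every component, and the reflection/doubling argument can stall if the colors on different components fail to interact through crossings. The careful part of the proof will be to use the non-zero determinant hypothesis to rule out this degeneration --- guaranteeing that the coloring matrix has enough rank modulo a prime $p\mid n$ that the realized colors cannot all collapse into a proper $2$-invariant subset --- and to handle $n$ that are composite or even (where $2$ may not be invertible) by isolating an odd prime power factor on which the doubling map is well-behaved. Getting the reduction to a single well-chosen prime (or prime power) to cooperate with the doubling count is where I expect the real work to lie.
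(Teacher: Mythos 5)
Your proposal has two fatal gaps, and both occur at exactly the places where this theorem is genuinely hard. First, the reduction to a single prime (or prime power) cannot give the stated bound. If you prove $C^*_n(L)\ge 1+\log_2 p$ for every prime $p\mid n$ and then take the largest such $p$, you obtain $1+\log_2 p_{\max}$, which is strictly smaller than $1+\log_2 n$ whenever $n$ is composite: for $n=15$ your route yields at best $1+\log_2 5\approx 3.32$, while the theorem asserts $1+\log_2 15\approx 4.91$. This is not a repairable detail. By the Lopes--Matias result quoted in the paper (\cite[Lemma 1.6]{LopesMatias}), ordinary non-trivial colorings satisfy $C_n(L)=C_p(L)$ for a single prime $p\mid (n,\det L)$, so any argument that factors through one prime divisor caps out at a bound depending on that prime alone. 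The entire point of the effectiveness hypothesis is that it imposes non-triviality at \emph{all} primes dividing $n$ simultaneously, and a correct proof must use all of them at once; your reduction discards all but one.

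Second, your doubling argument rests on a closure property that is precisely the known obstruction for links. The Fox relation produces the color $2b-a$ only when arcs colored $a$ and $b$ actually meet at a crossing as under- and over-arc; the set $S$ of used colors is \emph{not} closed under reflection of arbitrary pairs, so the claim that $S-S$ is closed under multiplication by $2$ is unjustified. Globalizing the local relation requires connectivity of the palette graph, which holds for knot diagrams but can fail for links --- this is exactly the failure pointed out in \cite{GeJinKauffmanLopesZhang} and in the paper's introduction, and your plan to ``use the non-zero determinant hypothesis to rule out this degeneration'' names the missing content rather than supplying it. Moreover, even granting closure, your pigeonhole count does not rearrange to the claimed bound: the chain $d_0, 2d_0, 4d_0,\dots$ has at most as many distinct elements as the multiplicative order of $2$, which can be as small as roughly $\log_2 n$, and comparing this with the number of pairs $|S|(|S|-1)/2$ gives only a bound on $|S|$ of order $\sqrt{\log_2 n}$, far below $1+\log_2 n$. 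The paper's actual proof is entirely different and global: it merges the columns of the coloring matrix according to equal colors, uses $\det L\ne 0$ to extract a nonsingular $(l-1)\times(l-1)$ integer matrix $B$, shows via a Smith-normal-form argument (Lemma \ref{lem3}) that effectiveness forces every prime power in $n$ to divide $\det B$, hence $n\le |\det B|$, and proves by induction on the restricted row types that $|\det B|\le 2^{l-1}$; the resulting sandwich $n\le 2^{l-1}$ is the theorem.
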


After preparing algebraic lemmas in Section 3, 
the proof of the theorem will be given in Section 4. 

\section{Effective colorings}

In this section, we set up our terminology. 

Let $L$ be a link and $D$ a regular diagram of $L$. 
We consider a map $\gamma:\{$arcs of $D\}\rightarrow \mathbb{Z}$. 
If, for a natural number $n$, 
$\gamma$ satisfies the condition 
$2\gamma(a)\equiv \gamma(b)+\gamma(c) \pmod n$ 
at each crossing of $D$ 
with the over arc $a$ and the under arcs $b$ and $c$, 
then $\gamma$ is called 
an \textit{$n$-coloring} on $D$. 
An $n$-coloring which assigns the same color to all the arcs of the diagram 
is called the \textit{trivial $n$-coloring}. 
Then $L$ is called an \textit{$n$-colorable} link 
if some diagram of $L$ admits a non-trivial $n$-coloring. 

Let us consider the cardinality of the image of $\gamma$ 
for a non-trivial $n$-coloring $\gamma$ on a diagram of $L$. 
We call the minimum of such cardinalities among 
all non-trivial $n$-colorings on diagrams of $L$ 
the \textit{minimal number of colors} of $L$ modulo $n$, 
and denote it by $C_n (L)$. 
We here use this notation following 
\cite{NakamuraNakanishiSatoh1, NakamuraNakanishiSatoh2}, 
but it is denoted by $mincol_n (L)$ in other papers 
including 
\cite{GeJinKauffmanLopesZhang, KauffmanLopez, LopesMatias}. 

In \cite[Lemma 1.6]{LopesMatias}, it was shown that 
if a non-splittable link $L$ with the determinant $\det L$ 
admits non-trivial $n$-colorings, 
then $C_n (L) = C_p (L)$ holds for a prime $p | (n , \det L)$. 
Thus we should consider more refined quantity than $C_n (L)$ 
for a non-prime $n$, for example, even $n$. 
In this paper, as in \cite{NakamuraNakanishiSatoh2}, 
we adapt the following definition, originally given in \cite{Kawauchi}. 

\begin{definition}
Suppose that a natural number $n$ 
has the prime decomposition $n=p_1^{e_1} \cdots p_s^{e_s}$ 
with primes $p_1, \cdots, p_s$. 
We say that 
a vector $\boldsymbol{x}=(x_1,\cdots,x_k)$ in $\mathbb{Z}^k$ 
is $p_i$-{\it trivial} 
if $x_1\equiv\cdots\equiv x_k \pmod {p_i}$ holds. 
If $\boldsymbol{x}$ is not $p_i$-trivial for any $p_i$, 
we say that $\boldsymbol{x}$ is \textit{$n$-effective}. 
Let $\gamma$ be an $n$-coloring on a diagram $D$ of a link $L$, 
and $\alpha_1,\cdots,\alpha_k$ the arcs of $D$. 
Put $x_i=\gamma(\alpha_i) \in \mathbb{Z}$ ($1 \le i \le k$). 
Then we define that $\gamma$ is an \textit{effective $n$-coloring} 
if $\gamma$ is non-trivial and $(x_1,\cdots,x_k)$ is $n$-effective.
\end{definition}

\section{Coloring matrix and the determinant of a link}

In this section, we recall some basic facts on 
$n$-colorings and determinants of links ($n \ge 2$), 
and give three algebraic lemmas used in the proof of Theorem \ref{thm} in the next section. 

First note that we have 
a system of homogeneous linear congruence equations modulo $n$ 
by regarding the arcs of a diagram $D$ of a link $L$ 
as algebraic variables and by setting up the equation at each crossing as: 
twice the over arc minus the sum of the under arcs equals zero modulo $n$. 
(See the next section for more details.) 
This system of congruence equations is called 
the \textit{coloring system of equations} for $D$. 
Then there is a natural correspondence between 
$n$-colorings of $D$ and solutions of the coloring system of equations for $D$. 

The coefficient matrix of the coloring system of equations for $D$ 
is called the \textit{coloring matrix} for $D$. 
It is known that 
the absolute value of the first minor of the coloring matrix for $D$ 
gives an invariant of the link $L$, 
which is coincident with the \textit{determinant} of $L$, denoted by $\det L$. 
See \cite{Lickorish} for example. 

Now we can see that there are non-trivial $n$-coloring of 
a link $L$ with $\det L \ne 0$ if and only if $n$ is not coprime to $\det L$. 
See \cite[Proposition 2.1]{LopesMatias} for example. 

\bigskip

In the following, we will give three algebraic lemmas 
used to proof our main theorem. 
To state them we prepare one more definition. 
Let $A$ be an integer matrix. 
If for a natural number $n$, 
there is an $n$-effective vector $\boldsymbol{x}$ 
such that $A\boldsymbol{x}\equiv \boldsymbol{0} \pmod n$, 
we say $\boldsymbol{x}$ is an \textit{$n$-effective solution} to $A$. 

The next two lemmas are the same as 
\cite[Lemma 2.1 and Lemma 2.2]{NakamuraNakanishiSatoh2} respectively. 
Thus we here omit the proofs. 

\begin{lemma}\label{lem1} 
Let $\boldsymbol{a}_1,\cdots,\boldsymbol{a}_k$ be vectors in $\mathbb{Z}^{k-1}$ 
such that $\boldsymbol{a}_1+\cdots+\boldsymbol{a}_k=\boldsymbol{0}$ with $k \ge 3$. 
Let $A=(\boldsymbol{a}_1,\cdots,\boldsymbol{a}_k)$ be 
the $(k-1)\times k$ matrix. 
Then the following are equivalent for $n \ge 2$.
\begin{enumerate}
\item
$A\boldsymbol{x}\equiv \boldsymbol{0} \pmod n$ 
has an $n$-effective solution $\boldsymbol{x}_0 = {}^t (x_1, \cdots, x_k)$. 
\item
$A'\boldsymbol{x}\equiv \boldsymbol{0} \pmod n$ 
has an $n$-effective solution $\boldsymbol{x}'_0 = {}^t (x'_1, \cdots, x'_{k-1}, 0)$ 
for $A'=(\boldsymbol{a}_1,\cdots,\boldsymbol{a}_{k-1},\boldsymbol{0})$.
\end{enumerate}
\end{lemma}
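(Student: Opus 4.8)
The plan is to exploit the single relation $\boldsymbol{a}_1+\cdots+\boldsymbol{a}_k=\boldsymbol{0}$, which says precisely that the all-ones vector $\boldsymbol{e}={}^t(1,\cdots,1)\in\mathbb{Z}^k$ lies in the kernel of $A$ over $\mathbb{Z}$, and hence $A\boldsymbol{e}\equiv\boldsymbol{0}\pmod n$ for every $n$. The whole argument rests on two elementary observations: first, that adding any integer multiple of $\boldsymbol{e}$ to a solution of $A\boldsymbol{x}\equiv\boldsymbol{0}\pmod n$ produces another solution; and second, that $n$-effectiveness is invariant under such a shift. The latter holds because, for each prime $p_i\mid n$, the congruence $x_j+c\equiv x_\ell+c\pmod{p_i}$ is equivalent to $x_j\equiv x_\ell\pmod{p_i}$, so translating every coordinate by the same constant $c$ neither creates nor destroys $p_i$-triviality.

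For $(1)\Rightarrow(2)$, I would start from an $n$-effective solution $\boldsymbol{x}_0={}^t(x_1,\cdots,x_k)$ and pass to $\boldsymbol{y}=\boldsymbol{x}_0-x_k\boldsymbol{e}={}^t(x_1-x_k,\cdots,x_{k-1}-x_k,0)$. By the first observation $A\boldsymbol{y}\equiv\boldsymbol{0}\pmod n$, and by the second $\boldsymbol{y}$ is again $n$-effective. Since the last coordinate of $\boldsymbol{y}$ vanishes, the column $\boldsymbol{a}_k$ of $A$ is never used, so $A\boldsymbol{y}=\sum_{i=1}^{k-1}y_i\boldsymbol{a}_i=A'\boldsymbol{y}$; thus $\boldsymbol{y}$ is an $n$-effective solution to $A'$ of the required form $\boldsymbol{x}'_0={}^t(x'_1,\cdots,x'_{k-1},0)$.

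The converse $(2)\Rightarrow(1)$ is immediate from the same identity: any vector of the form ${}^t(x'_1,\cdots,x'_{k-1},0)$ with vanishing last coordinate satisfies $A\boldsymbol{x}'_0=A'\boldsymbol{x}'_0$, so an $n$-effective solution to $A'$ of this shape is already an $n$-effective solution to $A$, requiring no modification.

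I expect no serious obstacle here; the one point that deserves care is the invariance of $n$-effectiveness under translation by $\boldsymbol{e}$, since this is exactly what legitimizes normalizing the $k$-th coordinate to $0$ without losing effectiveness. Everything else reduces to the bookkeeping identity $A\boldsymbol{y}=A'\boldsymbol{y}$ for vectors with vanishing last coordinate.
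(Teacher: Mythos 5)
Your proof is correct. Note that the paper itself does not prove this lemma---it cites \cite[Lemma 2.1]{NakamuraNakanishiSatoh2} and omits the argument---and your proof (translating a solution by $x_k\boldsymbol{e}$, where the all-ones vector $\boldsymbol{e}$ lies in the kernel of $A$ because $\boldsymbol{a}_1+\cdots+\boldsymbol{a}_k=\boldsymbol{0}$, together with the observation that $p_i$-triviality is invariant under adding a constant to every coordinate) is exactly the standard argument behind that cited lemma, so your proposal matches the intended proof in both substance and approach.
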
 


\begin{lemma}\label{lem2} 
Let $\boldsymbol{a}_1,\cdots,\boldsymbol{a}_{k-1}$ be vectors in $\mathbb{Z}^{k-1}$ 
with $k \ge 3$. 
Let $A=(\boldsymbol{a}_1,\cdots,\boldsymbol{a}_{k-1},\boldsymbol{0})$ 
be the $(k-1)\times k$ matrix. 
Then the following are equivalent.
\begin{enumerate}
\item
$A \boldsymbol{x} \equiv \boldsymbol{0} \pmod n$ has 
an $n$-effective solution $\boldsymbol{x}_0 = {}^t (x_1, \cdots, x_{k-1}, 0)$.
\item
$A' \boldsymbol{x} \equiv \boldsymbol{0} \pmod n$ has 
an $n$-effective solution $\boldsymbol{x}'_0= {}^t (x'_1, \cdots, x'_{k-1}, 0)$, 
where $A'$ is one of the following;
\begin{enumerate}
\item
$A'=(\boldsymbol{a}_1,\cdots,\boldsymbol{a}_{i-1},\boldsymbol{a}_i+\lambda\boldsymbol{a}_j,\boldsymbol{a}_{i+1},\cdots,\boldsymbol{a}_{k-1},\boldsymbol{0})$ with $\lambda \in \mathbb{Z}$, 
\item
$A'=(\boldsymbol{a}_1,\cdots,\boldsymbol{a}_{i-1},\boldsymbol{a}_j,\boldsymbol{a}_{i+1},\cdots,\boldsymbol{a}_{j-1},\boldsymbol{a}_i,\boldsymbol{a}_{j+1},\cdots,\boldsymbol{a}_{k-1},\boldsymbol{0})$, or 
\item
$A'=(\boldsymbol{a}_1,\cdots,\boldsymbol{a}_{i-1},-\boldsymbol{a}_i,\boldsymbol{a}_{i+1},\cdots,\boldsymbol{a}_{k-1},\boldsymbol{0})$. 
\end{enumerate}
\end{enumerate}
\end{lemma}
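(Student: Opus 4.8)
The plan is to read each of the three operations as right-multiplication of $A$ by an invertible integer elementary matrix and to transport $n$-effective solutions along it explicitly. First I would record the reformulation of effectiveness available in this special situation. Since the last column of $A$ is $\boldsymbol{0}$ and every solution under consideration has last entry $0$, a vector ${}^t(x_1,\dots,x_{k-1},0)$ is $p$-trivial for a prime $p$ precisely when $x_1\equiv\cdots\equiv x_{k-1}\equiv 0 \pmod p$. Hence such a vector is $n$-effective if and only if, for every prime $p$ dividing $n$, at least one of $x_1,\dots,x_{k-1}$ is not divisible by $p$. This is the concrete form of effectiveness that I will track through the operations.

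Next I would write $A=(B\mid\boldsymbol{0})$, where $B=(\boldsymbol{a}_1,\dots,\boldsymbol{a}_{k-1})$ is the $(k-1)\times(k-1)$ block, and note that each of (a), (b), (c) replaces $B$ by $BE$, where $E$ is the $(k-1)\times(k-1)$ elementary integer matrix that adds $\lambda$ times the $j$-th column to the $i$-th column, swaps the $i$-th and $j$-th columns, or negates the $i$-th column, respectively. Thus $A'=A\tilde{E}$ with $\tilde{E}=\mathrm{diag}(E,1)$ invertible over $\mathbb{Z}$. Given an $n$-effective solution $\boldsymbol{x}_0$ to $A$, the vector $\boldsymbol{x}'_0:=\tilde{E}^{-1}\boldsymbol{x}_0$ satisfies $A'\boldsymbol{x}'_0=A\tilde{E}\tilde{E}^{-1}\boldsymbol{x}_0=A\boldsymbol{x}_0\equiv\boldsymbol{0}\pmod n$, and since $\tilde{E}^{-1}$ fixes the last coordinate, $\boldsymbol{x}'_0$ again has the form ${}^t(x'_1,\dots,x'_{k-1},0)$. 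Because the inverse of each of the three operations is again of type (a) (with $-\lambda$), (b), or (c), the matrix $A$ is recovered from $A'$ by an operation from the same list, so running the construction backwards handles the converse; it therefore suffices to prove a single implication.

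The step I expect to be the main point is checking that $\boldsymbol{x}'_0=\tilde{E}^{-1}\boldsymbol{x}_0$ remains $n$-effective. For (b) the coordinates are only permuted, and for (c) one coordinate is negated; in both cases the divisibility of each coordinate by a prime $p$ is unchanged, so the reformulated effectiveness condition survives. For (a) the inverse elementary matrix alters a single coordinate, giving $x'_j=x_j-\lambda x_i$ with all other coordinates fixed, and here I would argue by cases for each prime $p\mid n$: if some coordinate other than the $j$-th is nonzero modulo $p$, it is untouched and effectiveness holds at $p$; otherwise $x_j$ is the only coordinate nonzero modulo $p$, so in particular $x_i\equiv 0\pmod p$ and hence $x'_j\equiv x_j\not\equiv 0\pmod p$. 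In either case some coordinate of $\boldsymbol{x}'_0$ is nonzero modulo $p$, so $\boldsymbol{x}'_0$ is $n$-effective, which completes the argument. The only delicate case is thus the transvection (a), where the worry that combining two columns might make every coordinate divisible by $p$ is defused exactly by the observation that the changed coordinate is modified by a multiple of $x_i$, which is itself divisible by $p$ in the critical case.
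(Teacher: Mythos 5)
Your proof is correct. There is, however, nothing in the paper to compare it against line by line: the authors state that this lemma is identical to Lemma 2.2 of Nakamura--Nakanishi--Satoh \cite{NakamuraNakanishiSatoh2} and explicitly omit the proof. Your argument is the natural self-contained route and fills that gap: realizing each of (a), (b), (c) as right multiplication by an integer elementary matrix $\tilde{E}=\mathrm{diag}(E,1)$ that fixes the last coordinate, transporting solutions via $\boldsymbol{x}'_0=\tilde{E}^{-1}\boldsymbol{x}_0$, and noting that the inverse operation is again of type (a), (b), or (c) so that one implication suffices. You also correctly isolate and resolve the only delicate point: since the last entry is $0$, $n$-effectiveness reduces to ``for each prime $p \mid n$ some coordinate is nonzero mod $p$,'' and in the transvection case (a) the modified coordinate $x'_j = x_j - \lambda x_i$ can only fail to be nonzero mod $p$ when some other coordinate already witnesses effectiveness, because in the critical case $x_i \equiv 0 \pmod p$. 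Two implicit hypotheses are worth flagging, though neither is a gap: operation (a) requires $i \neq j$ (otherwise $E$ is not invertible over $\mathbb{Z}$), which the lemma clearly intends and which your case analysis uses when concluding $x_i \equiv 0 \pmod p$; and the indices $i,j$ range over $1,\dots,k-1$, so that $\tilde{E}$ genuinely fixes the last coordinate and the solution keeps the required form ${}^t(x'_1,\dots,x'_{k-1},0)$.
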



The next lemma is implicitly used 
in \cite[Proposition 2.3]{NakamuraNakanishiSatoh2}. 
We here include a brief proof for completeness. 

\begin{lemma}\label{lem3}
Any matrix $A$ with $m$ rows 
can be transformed to the matrix $A'$ below 
by fundamental transformations 
without multiplying an integer other than $\pm 1$ 
to a row or a column of $A$.
\[
  A' = \left(
    \begin{array}{ccccccc}
      d_1 & 0 & \cdots & 0 & 0 & \cdots & 0 \\
      0 & d_2 & \ddots & \vdots& \vdots & \cdots & \vdots \\
      \vdots & \ddots & \ddots & 0 & \vdots & \cdots & \vdots \\
      0 & \cdots & 0 & d_m & 0 & \cdots & 0
    \end{array}
  \right)
\]
Here $d_i$'s are all natural numbers and $d_i$ is divisible by $d_{i-1}$.
\end{lemma}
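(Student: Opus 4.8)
The plan is to recognize this as a constrained version of the Smith normal form theorem over $\mathbb{Z}$: the diagonal form with the divisibility chain $d_1 \mid d_2 \mid \cdots \mid d_m$ is exactly the Smith normal form, and the restriction ``without multiplying an integer other than $\pm 1$ to a row or a column'' is automatically respected by the classical reduction algorithm, since over $\mathbb{Z}$ that algorithm uses only row/column swaps, addition of an integer multiple of one row (or column) to another, and sign changes by $-1$. Thus the task reduces to carrying out the standard reduction and verifying that no genuine scaling is ever needed.

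First I would dispose of the case $A=\boldsymbol{0}$, and otherwise proceed by induction on the number of rows. The core of the argument is a Euclidean-descent step producing the $(1,1)$ entry. Among all nonzero entries of $A$, I would use row and column swaps to move one of smallest absolute value to position $(1,1)$, and apply a sign change if necessary so that it is positive. For every other entry in the first column, division with remainder lets me subtract an appropriate integer multiple of the first row; if the remainder is nonzero, it is strictly smaller in absolute value than the current $(1,1)$ entry, so I swap it into position $(1,1)$ and repeat. The same is done along the first row using column operations. Because the positive $(1,1)$ entry strictly decreases at each such swap, this clearing process terminates, leaving the first row and first column zero except at $(1,1)$.

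The step I expect to be the main obstacle is securing the divisibility condition $d_{i-1} \mid d_i$, since the naive clearing above only produces a block-diagonal form, not the divisibility chain. To handle this I would insert the standard correction: after clearing, if some entry $a_{ij}$ with $i,j>1$ is not divisible by the current $(1,1)$ entry $d_1$, I add row $i$ to row $1$, which places an entry not divisible by $d_1$ into the first row, and then rerun the clearing step; this strictly decreases $d_1$ again, so after finitely many rounds $d_1$ divides every entry of the matrix. At that point the first row and column are cleared and $d_1$ divides all remaining entries.

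Finally I would apply the inductive hypothesis to the $(m-1)$-row submatrix obtained by deleting the first row and column. The key observation making the induction go through is that all operations used on the submatrix are themselves fundamental transformations of the allowed type, and that the property ``$d_1$ divides every remaining entry'' is preserved under them, so the diagonal entry $d_2$ produced for the submatrix is divisible by $d_1$. Iterating yields the required chain $d_1 \mid d_2 \mid \cdots \mid d_m$ and the matrix $A'$, with any zero diagonal entries (arising when $A$ is not of full row rank) appearing last, consistent with the divisibility requirement since every integer divides $0$.
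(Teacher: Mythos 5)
Your proof is correct and takes essentially the same route as the paper's: both are the standard Euclidean-reduction argument for the Smith normal form over $\mathbb{Z}$, using only swaps, sign changes, and additions of integer multiples of rows/columns, followed by recursion on the complementary submatrix. The only stylistic difference is that the paper secures divisibility of all entries by the corner entry by choosing, among all reachable matrices, one whose $(1,1)$-entry is globally minimal and arguing by contradiction, whereas you run an explicit descent with a correction step (and you also treat the rank-deficient case, which the paper leaves implicit).
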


\begin{proof}
Let $A=(a_{ij})$ be a $t\times m$ matrix. 
The transformations which we use here are as follows.
\begin{enumerate}
\item
Adding the $i$th column multiplied by an integer $k$ to the $j$th column.
\item
Multiplying $-1$ to the $i$th column.
\item
Exchanging the $i$th column for the $j$th column.
\end{enumerate}
We transform $A$ to a matrix $A_1$ 
such that the $(1,1)$-entry is positive and 
is smaller than or equal to 
the absolute values of all the entries of all the matrices obtained from $A$ 
by performing (1), (2), (3) repeatedly. 
We here note that 
all the entries of $A'$ are divisible by $a_{11}$, 
for, the otherwise we could find another matrix 
obtained by using (3) 
with an entry smaller than $a_{11}$, 
contradicting the assumption on $A_1$. 
Thus by using (3) repeatedly, $A_1$ is transformed to 
%
%
\[
  A_2 = \left(
    \begin{array}{ccccc}
      a_{11} & 0 & \cdots & \cdots & 0 \\
      0 & a_{22} & a_{32} & \cdots& a_{t2} \\
      \vdots & a_{23} & \cdots & \cdots & a_{t3} \\
      \vdots & \vdots & \ddots & \ddots & \vdots \\
      0 & a_{2m} & \cdots & \cdots & a_{tm}
    \end{array}
  \right)
\]
We apply this procedure for the submatrix of $A_2$ 
obtained by deleting the first column and row, 
and continue the same repeatedly. 
Finally we obtain the desired matrix 
\[
  A' = \left(
    \begin{array}{ccccccc}
      d_1 & 0 & \cdots & 0 & 0 & \cdots & 0 \\
      0 & d_2 & \ddots & \vdots& \vdots & \cdots & \vdots \\
      \vdots & \ddots & \ddots & 0 & \vdots & \cdots & \vdots \\
      0 & \cdots & 0 & d_m & 0 & \cdots & 0
    \end{array}
  \right)
\]
such that $d_1$ is at least $1$ and $d_i$ is divisible by $d_{i-1}$ ($1 \le i \le m$).
\end{proof}

\section{Proof of theorem}

\begin{proof}[Proof of Theorem \ref{thm}]
Let $D$ be a diagram of $L$ with $k$ crossings 
admitting an $n$-effective $n$-coloring $\gamma$. 
Let $\alpha_1,\cdots,\alpha_k$ be the arcs of $D$, 
and $q_1,\cdots,q_k$ be the crossings of $D$. 
Let $x_1 = \gamma(\alpha_1),\cdots, x_k=\gamma(\alpha_k)$ 
be colors (integers) on $\alpha_1,\cdots,\alpha_k$, and 
$l$ be the number of the distinct colors on $D$. 
Then we will show that $l \ge 1 + \log_2 n$ holds.

We recall the construction of the coloring matrix of $\gamma$. 
Precisely, associated to $\gamma$, we obtain the coloring matrix 
$A=(a_{ij})$ with $a_{ij}\in\mathbb{Z}$, 
which is a $k \times k$ matrix, as follows. 
$$
a_{ij}=
\begin{cases}
-2 & \text{ if $\alpha_j$ is the over arc on $q_i$,} \\
1 & \text{ if $\alpha_j$ is the under arc on $q_i$, and}\\
0 & \text{otherwise.}
\end{cases}
$$
We put  $\boldsymbol{x}_0={}^t (x_1,\cdots,x_k) \in \mathbb{Z}^k$. 
Since the coloring $\gamma$ is an effective $n$-coloring, 
the vector $\boldsymbol{x}_0$ gives an $n$-effective solution 
of the congruence equations $A\boldsymbol{x}\equiv \boldsymbol{0} \pmod n$. 

The next procedure is the key of our proof. 
Let us produce another matrix $A_1$ from $A$ as follows: 
First, take $x_k$. 
If $x_k=x_{k-1}$, 
add the $k$th column to the $(k-1)$th column and delete the $k$th column, 
and go to the next step. 
If $x_k\neq x_{k-1}$ and $x_k= x_{k-2}$, 
add the $k$th column to the $(k-2)$th column and delete the $k$th column, 
and go to the next step. 
Repeat this procedure in turn until the $1$st column. 
If $x_k\neq k_{k-1},\cdots,x_k\neq x_1$, go to the next step. 
Next, take $x_{k-1}$ and repeat the same procedure. 
We perform this until we take $x_2$ and $x_2\neq x_1$. 
Let $A_1$ be the matrix so obtained. 
We here remark that the number of columns of $A_1$ is equal to $l$. 
That is, $A_1$ is a $k \times l$-matrix. 

Let $\boldsymbol{y}_0={}^t (y_1,\cdots,y_l)$ be 
the vector obtained from $\boldsymbol{x}_0$ 
by deleting the entries 
which correspond to the columns deleted in the procedure above. 
Then, by the procedure above, 
this $\boldsymbol{y}_0$ gives 
a solution to $A_1\boldsymbol{y}\equiv\boldsymbol{0} \pmod n$. 
Moreover $\boldsymbol{y}_0$ is $n$-effective, 
since the set of entries of $\boldsymbol{y}_0$ is 
just equals to the set of entries of $\boldsymbol{x}_0$.

Since the entries on each row of $A$ are $-2$, 1, 1, and 0's, 
we obtain a column with only 0's by adding all the other columns to a fixed column. 
Note that the same holds for $A_1$. 
Because, by the procedure making $A_1$ from $A$, 
the sum of the column vectors of $A_1$ is equal to that for $A$. 
Then, using fundamental transformations, 
we can deform $A_1$ to the $k\times l$ matrix 
$A_2=(\mathbf{a}_1,\cdots,\mathbf{a}_{l-1},\boldsymbol{0})$ 
by adding all the other column to the $l$th column. 
Note that all the columns other than the $l$th are 
shared by $A_1$ and $A_2$. 
Then, by Lemma \ref{lem1} ((1) $\to$ (2)), 
$A_2\boldsymbol{y}\equiv\boldsymbol{0} \pmod n$ has 
an $n$-effective solution $\boldsymbol{y}_1$. 

Here we can see that $\mathrm{rank} A_2 = l-1$ as follows. 
Since $\det (L)$ is equal to 
the absolute value of a first minor of $A$, 
which is not equal to 0 by the assumption, 
any $k-1$ vectors among the $k$ column vectors of $A$ 
are linearly independent. 
That is, $\mathrm{rank} A = k-1$. 
Then, since the procedure making $A_1$ from $A$ 
is comprised of fundamental transformations of columns 
and deleting column vectors, 
together with the property that the sum of the column vectors of $A_1$ 
is $\boldsymbol{0}$, 
the number of linearly independent column vectors of $A_1$ 
is equal to $l-1$, 
implying that $\mathrm{rank} A_2 = l-1$. 

It follows that there are $l-1$ vectors 
which are linearly independent 
among the $k$ row vectors of $A_2$. 
By deleting the other row vectors from $A_2$, 
we obtain the matrix $A_3$, which is a $(l-1) \times l$ matrix. 
Note that the sum of the column vectors of $A_2$ is $\boldsymbol{0}$ still. 

Since the set of row vectors of $A_3$ is just a subset of that of $A_2$, 
the vector $\boldsymbol{y}_1$ also gives 
an $n$-effective solution to $A_3\boldsymbol{y}\equiv\boldsymbol{0} \pmod n$. 

Here, let $B$ be the matrix obtained from $A_3$ by deleting the $l$th column. 
Note that $\det B \ne 0$ since $\mathrm{rank} B = \mathrm{rank} A_3 = l-1$.

We deform $A_3$ to the next $A'_3$ by applying Lemma \ref{lem3} 
to $B$ as a part of $A_3$. 
\[
  A'_3 = \left(
    \begin{array}{ccccc}
      d_1 & 0 & \cdots & 0 & 0 \\
      0 & d_2 & \ddots & \vdots& \vdots \\
      \vdots & \ddots & \ddots & 0 & \vdots \\
      0 & \cdots & 0 & d_m & 0
    \end{array}
  \right)
\]
Here we can have $d_1 \ge 1$ and $d_i$ is divisible by $d_{i-1}$. 
By Lemma \ref{lem2} ((1) $\to$ (2)), 
together with the fact that 
if admitting an $n$-effective solution is invariant 
under the fundamental transformations of rows, 
$A'_3\boldsymbol{y}\equiv\boldsymbol{0} \pmod n$ 
has an $n$-effective solution $\boldsymbol{y}'_1 ={}^t (y_1,\cdots,y_{l-1},0)$. 

Now we follow the argument developed 
in \cite[Proposition 2.3 (ii)]{NakamuraNakanishiSatoh2}. 
Since $\boldsymbol{y}'_1$ is an $n$-effective solution, 
for each prime factor $p_i$ of $n$, 
there is $y_i$ which is not congruence to $0 \pmod {p_i}$. 
For this $y_i$, we have $d_i \equiv 0 \pmod {p_i}$ from $d_i y_i \equiv 0 \pmod n$. 
Thus $\det B = d_1 \cdots d_m \equiv 0 \pmod {p_i}$ for any $p_i$. 
This implies that $\det B \equiv 0 \pmod n$. 
Since $\det B \ne 0$, it concludes that $|\det B| \ge n$. 

\medskip

On the other hand, in the following, we show that $|\det B| \le 2^{l-1}$. 

By the definition of $A$, 
each row of $A$ includes $\{1,1,-2\}$ with the other entries are all $0$. 
By the procedure making $A_1$ from $A$, 
each row of $A_1$ includes either 
(i) $\{1,1,-2\}$, (ii) $\{2,-2\}$, (iii) $\{1,-1\}$ 
with the other entries are all $0$. 
Since the entries in a row vector of $A_2$ 
is just those for $A_1$ with the $l$th entry deleted, 
each row of $A_2$ includes either above (i), (ii), (iii), or 
(iv) $\{1,1\}$, (v) $\{1,-2\}$, (vi) $\{1\}$, (vii) $\{-1\}$ , (viii) $\{2\}$ , (ix) $\{-2\}$ 
with the other entries are all $0$. 
Each row vector of $A_3$ is either of type (i) to (ix), 
since $A_3$ is obtained from $A_2$ by just deleting a number of rows. 
Further, in the same way as above, 
we see that each row vector of $B$ is either of type (i) to (ix) also. 

Now, to complete the proof of Theorem \ref{thm}, it suffices to show the next claim. 

\begin{claim}
Any square matrix $M$ of size $\mu$ with row vectors 
each of which is either of type \textup{(i)} to \textup{(ix)} 
has the determinant $\det M$ with $| \det M| \le 2^\mu$. 
\end{claim}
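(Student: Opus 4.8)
The plan is to prove the Claim by induction on $\mu$, the engine being the observation that the class of matrices all of whose rows are of type \textup{(i)}--\textup{(ix)} is closed under deleting any one row together with any one column. Indeed, striking a column out of a row of type \textup{(i)}--\textup{(ix)} again produces a row of type \textup{(i)}--\textup{(ix)} (or a zero row), as one checks by inspection; for instance deleting a column from $\{1,1,-2\}$ yields $\{1,-2\}$, $\{1,1\}$, $\{-2\}$, or $\{1,1,-2\}$ again. Hence every first minor $M_{ij}$ obtained by deleting row $i$ and column $j$ is a matrix of size $\mu-1$ of the same kind, so the inductive hypothesis gives $|\det M_{ij}|\le 2^{\mu-1}$. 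The base case $\mu=1$ is immediate, since a single-entry row must be of type \textup{(vi)}--\textup{(ix)} and so $|\det M|\le 2$.

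First I would dispatch the easy situations by Laplace expansion along a row, which gives $|\det M|\le\sum_j|m_{ij}|\,|\det M_{ij}|\le 2^{\mu-1}\sum_j|m_{ij}|$, and symmetrically along a column. Thus whenever $M$ has a row of $\ell^1$-norm at most $2$ (types \textup{(iii)}, \textup{(iv)}, \textup{(vi)}--\textup{(ix)}), or a column whose entries have absolute values summing to at most $2$, we obtain at once $|\det M|\le 2\cdot 2^{\mu-1}=2^{\mu}$. This reduces matters to the case in which every row is of type \textup{(i)} $\{1,1,-2\}$, \textup{(ii)} $\{2,-2\}$, or \textup{(v)} $\{1,-2\}$, that is, exactly the rows of $\ell^1$-norm exceeding $2$; note each such row has a single entry equal to $-2$ and positive remaining entries summing to at most $2$. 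The significance of isolating these three types is that for them the crude triangle-inequality bound on the cofactor expansion overshoots $2^{\mu}$, so genuine cancellation must be exploited.

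To bring out that cancellation I would add all columns of $M$ into one column, a unimodular operation preserving $\det M$; the new column is the vector of row sums, whose entries are $0$ for rows of type \textup{(i)} and \textup{(ii)} and $-1$ for rows of type \textup{(v)}. If there is no row of type \textup{(v)} this column vanishes and $\det M=0$. Otherwise, expanding along this $\{0,-1\}$-valued column leaves only the type-\textup{(v)} rows contributing, and each cofactor is again a matrix of the admissible class (it is $M$ with a row and the summed column removed), bounded by $2^{\mu-1}$. The main obstacle lies precisely here: counting the surviving terms only yields $v_0\cdot 2^{\mu-1}$, where $v_0$ is the number of type-\textup{(v)} rows, which is too weak once $v_0\ge 3$ (for example the $3\times 3$ cyclic matrix with rows $\{1,-2\}$ has determinant $-7$, near the bound $8$). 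Overcoming this is the delicate heart of the proof; I would attempt a secondary induction on $v_0$, using that the cofactors arising in the expansion tend to fall back into the easy cases above, since rows that have lost a column frequently drop to $\ell^1$-norm $2$, so that the $v_0$ surviving cofactors interfere rather than all attaining $2^{\mu-1}$ simultaneously. Making this interference quantitative, so that the total stays within $2^{\mu}$, is the crux of the argument.
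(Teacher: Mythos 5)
Your reduction is sound up to a point: the closure of the admissible class under deleting a row and a column is correct, the easy cases (rows of $\ell^1$-norm at most $2$) are handled properly, and the global column-sum trick correctly kills the case where every row is of type (i) or (ii), as well as the cases with one or two rows of type (v). But the case of three or more type-(v) rows, which you yourself flag as ``the crux,'' is genuinely unresolved, so the proposal is not a proof. Your own example shows why the difficulty is real: the $3\times 3$ cyclic matrix with rows $\{1,-2\}$ has determinant $-7$ against the bound $8$, so the target is nearly attained and no counting argument with slack (such as $v_0\cdot 2^{\mu-1}$) can work; the hoped-for ``interference'' among cofactors is never made quantitative, and nothing guarantees that a secondary induction on $v_0$ closes.

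The idea you are missing --- and the one the paper uses --- is to treat each bad row \emph{locally} by a column operation before expanding, exploiting the fact that the class is closed not only under deleting a row and a column (your observation) but also under \emph{merging} two columns, i.e.\ adding one column into another and then deleting the former: merging inside $\{1,1,-2\}$ gives $\{2,-2\}$ or $\{1,-1\}$, inside $\{2,-2\}$ gives a zero row, inside $\{1,-2\}$ gives $\{-1\}$, and so on, so every row stays in the class or becomes zero (and a zero row forces $\det M=0$). Concretely, given a row of type (v) with its $1$ in column $a$ and its $-2$ in column $b$, add column $a$ to column $b$; the row becomes $\{1,-1\}$, of type (iii), and the determinant is unchanged. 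Now expand along that row. The cofactor from column $b$ deletes the modified column, so it is just $M$ minus one row and column $b$, hence in the class; the cofactor from column $a$ keeps the modified column but deletes column $a$, so its rows are the original rows with columns $a$ and $b$ merged, hence again in the class. Induction gives $|\det M|\le 2^{\mu-1}+2^{\mu-1}=2^\mu$, with no need to count type-(v) rows at all. The same trick converts a type (ii) row into a singleton $\pm 2$, giving $2\cdot 2^{\mu-1}$, and the only remaining case is all rows of type (i), where the row sums vanish and $\det M=0$. This local cancellation is exactly the quantitative interference your outline was searching for.
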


\begin{proof}
We show this by induction of the size $\mu$. 

If $\mu=1$, then $M$ is either $(1)$, $(-1)$, $(2)$, $(-2)$, 
and so, we have $|\det M| \le 2$. 

Assume that $|\det M| \le 2^\mu$ holds for $\mu \le \nu -1$, 
and consider the case $\mu = \nu$. 

If some of the row vector of $M$ is either of type (iii), (iv), (vi), (vii), (viii), (ix), 
then by using the cofactor expansion along the row, 
we have $|\det M| \le 2^\mu$ by the assumption as the desired. 

If some of the row vector of $M$ is of type (ii), 
then by the fundamental transformation, 
$M$ is deformed into another matrix $M'$ having a row, which is of type (viii). 
This $M'$ may not satisfy the assumption of the induction 
at the column corresponding to that of $M$ including $2$ in the row. 
For example, $M'$ may contain a row including $\{ 1, 2, -2\}$. 
However, applying the cofactor expansion to $M'$ along the row of type (viii), 
$|\det M'|$ is calculated as 2 times 
the absolute value of the determinant of 
the minor matrix which satisfies the assumption of the induction. 
Thus we have $|\det M| \le 2^\mu$ as the desired. 

If some of the row vector of $M$ is of type (v), 
then by the fundamental transformation, 
$M$ is deformed into another matrix having a row, which is of type (iii). 
Applying the same argument as above, 
we also have $|\det M| \le 2^\mu$ as the desired. 

The remaining case is just for all the row vectors of $M$ are of type (i). 
In this case, the sum of all the column vectors of $M$ must be $\boldsymbol{0}$. 
This means that $\det M =0$ in this case, obviously satisfying $|\det M| \le 2^\mu$. 
\end{proof}

Consequently, we have $n \le |\det B| \le 2^{l-1}$, that is, $n \le 2^{l-1}$, 
equivalently, $1+ \log_2 n \le l$. 
This completes the proof. 

\end{proof}

%
%

\section*{Acknowledgement}
The authors would like to thank Yasutaka Nakanishi, 
Shin Satoh and Jun Ge for useful discussions in this topic. 
The first author is partially supported by JSPS KAKENHI Grant Number 26400100.

\end{document}